\documentclass[reqno,11pt]{amsart}

\usepackage[top=1.9cm,bottom=1.9cm,left=2.4cm,right=2.4cm]{geometry}

\usepackage{amsmath, amsthm, amssymb}
\usepackage{mathrsfs, dsfont, amsfonts}
\usepackage{functan}
\usepackage{extarrows}
\usepackage{mathtools}
\usepackage{appendix}

\usepackage[colorlinks,
            linkcolor=purple,
            anchorcolor=purple,
            citecolor=purple
            ]{hyperref}

\usepackage{xcolor}
\usepackage{latexsym}
\usepackage{enumerate}
\usepackage{float}
\usepackage{relsize}
\usepackage{marginnote}
\usepackage{stmaryrd}
\usepackage{esint}
\usepackage{graphicx}
\usepackage{bm}
\usepackage{verbatim} 

\usepackage{cite}
\allowdisplaybreaks
\usepackage{aliascnt}

\theoremstyle{plain}
\newtheorem{theorem}{Theorem}[section]

\newaliascnt{corollary}{theorem}
\newtheorem{corollary}[corollary]{Corollary}
\aliascntresetthe{corollary}

\newaliascnt{lemma}{theorem}
\newtheorem{lemma}[lemma]{Lemma}
\aliascntresetthe{lemma}

\newaliascnt{proposition}{theorem}

\aliascntresetthe{proposition}

\newaliascnt{remark}{theorem}
\newtheorem{remark}[remark]{Remark}
\aliascntresetthe{remark}

\numberwithin{equation}{section}

\def\R{\mathbb{R}} 
\def\T{\mathbb{T}} 
\def\d{\mathrm{d}} 

\def\eps{\varepsilon}

\usepackage{bbm}
\def\ch{\mathbbm{1}}
\def\supp{\operatorname*{supp}} 

\def\ls{\lesssim}

\def\p{\partial} 
\def\dst{\operatorname*{dist}} 
\def\diam{\operatorname*{diam}} 

\title[Vorticity Gradient Growth in 2D Euler]
{Remarks on Linear Growth of Vorticity Gradients and Support Diameters for 2D Euler Flow in Half-Plane}

\author[S.-Q. Chen]{Shaoqing Chen}
\address{Department of Mathematics, Nanjing University, Nanjing
 210093, P. R. China}
\email{sqchen@smail.nju.edu.cn}
\author[Y.-Z. Sun]{Yongzhong Sun}
\address{Department of Mathematics, Nanjing University, Nanjing
 210093, P. R. China}
\email{sunyongzhong@163.com}

\begin{document}

\begin{abstract}
It has been conjectured that generic smooth solutions of the two-dimensional Euler equation exhibit linear growth of vorticity gradients.
We prove an elementary arbitrary-background perturbation principle in the odd symmetric setting. More precisely, for any compactly supported nonnegative function in the half-plane, one can find an arbitrarily small smooth nonnegative perturbation whose associated solution undergoes linear-in-time filamentation in the quadrant.
The main ingredients are the lower bound of the center of mass given by Iftimie-Sideris-Gamblin \cite{Iftimie1999}, and the velocity estimate for the sparse part to capture those slowly moving particles.
\end{abstract}

\keywords{2D Euler equation, filamentation, vorticity gradient growth, support growth, large time behavior}

\subjclass[2020]{Primary 35Q35; Secondary 35Q31, 35B40, 35B65, 76B47.}

\maketitle
\tableofcontents

\section{Introduction}

\subsection{Planar Euler Flow with Symmetries}
The planar Euler system can be formulated in terms of the vorticity $\omega=\nabla^\perp v$\footnote{In this paper, we denote $x^\perp = (-x_2, x_1)$ for a planar vector $x=(x_1,x_2)$.} as: 
\begin{equation}\label{euler}\tag{E}
\left\{
\begin{aligned}
\partial_t \omega+v\cdot \nabla \omega = 0 &\text{ in } \R^2,\\
v= \nabla^\perp (-\Delta)^{-1} \omega &\text{ in } \R^2,\\
\omega|_{t=0} =\omega_0. &
\end{aligned}
\right.
\end{equation}
Here, $v$ is the velocity field, and $\omega$ is the vorticity. 
In this full plane case, the Biot-Savart law asserts that the velocity field $v$ can be recovered from the vorticity $\omega$ via convolution with the Biot-Savart kernel, i.e., $$
v(t,x) = \int_{\R^2} K_0(x-y)\omega(t,y)\,\mathrm{d}y,
$$
where
$K_0(x)=\frac{1}{2\pi}\frac{x^\perp}{|x|^2}$.

The well-posedness theory of the two-dimensional Euler system has been extensively studied since the classical work of Wolibner \cite{Wolibner1933}.
In his important work \cite{Yudovich1963}, Yudovich studied the weak solutions of \eqref{euler} in the space $L^1\cap L^\infty$, and established the global existence and uniqueness of such solutions. 
See also classical textbooks \cite{Majda2002,Bahouri2011} for modern treatments on this topic.

A special kind of solution to \eqref{euler} enjoys odd symmetry, that is, $$
\omega_0(x_1,x_2)=-\omega_0(x_1,-x_2).
$$
From the uniqueness of Yudovich solutions, $\omega(t,x)$ preserves the odd symmetry for all time. Denote $$
\mathbb{R}_+^2:=\{x=(x_1, x_2)\in \R^2: x_2 > 0\},
$$
then any odd solution can be considered as the solution in the first quadrant $\mathbb{R}_+^2$ with slip boundary condition, i.e., $v\cdot n = 0$ on $\p \mathbb{R}_+^2$.
The corresponding Biot-Savart law then becomes 
$$
v(t,x)=\int_{\mathbb{R}_+^2} K(x,y)\omega(t,y)\,\mathrm{d}y,
$$
where the kernel is obtained by the method of images as
\begin{equation}
\label{kernel_half_plane}
K(x,y)=\frac{1}{2\pi}\left(\frac{(x-y)^\perp}{|x-y|^2}-\frac{(x-\bar{y})^\perp}{|x-\bar{y}|^2}\right).
\end{equation}

\subsection{Long Time Behavior of the Support, and Main Result}

The transport nature ensures that the vorticity is conserved along the particle trajectories, which implies that the $L^p(\mathbb{R}_+^2)$ norms of the vorticity are conserved for all $p\in[1,\infty]$. In particular, the total vorticity $$
m := \|\omega(t,\cdot)\|_{L^1(\mathbb{R}_+^2)}=\int_{\mathbb{R}_+^2} \omega(t,x)\,\mathrm{d}x 
$$
and the maximum vorticity $$
M := \|\omega(t,\cdot)\|_{L^\infty(\mathbb{R}_+^2)}
$$
are conserved along the flow.

Another important conserved quantity is the (pseudo) energy, which reads 
$$
\begin{aligned}
E[\omega]&:=-\frac{1}{2\pi}\int_{\mathbb{R}_+^2}\int_{\mathbb{R}_+^2}\log\frac{|x-y|}{|x-\bar{y}|}\omega(x)\omega(y)\,\mathrm{d}x\mathrm{d}y \\
&= \frac{1}{4\pi}\int_{\mathbb{R}_+^2}\int_{\mathbb{R}_+^2}\log\left(1+\frac{4 x_2 y_2}{|x-y|^2}\right)\omega(x)\omega(y)\,\mathrm{d}x\mathrm{d}y.
\end{aligned}
$$

We denote the center of mass in $\mathbb{R}_+^2$ as $$
P(t):=\frac{1}{m}\int_{\mathbb{R}_+^2} x \omega(t,x)\,\mathrm{d}x = (P_1(t), P_2(t)).
$$
In the odd setting, the center of mass is not conserved. In \cite[Section 2.2.1]{Iftimie2007}, the following time-independent estimate on the center of mass is established:
\begin{equation}\label{center_of_mass_half_plane}
P_1'(t) \ge V_P, P_2'(t) = 0,
\end{equation}
which implies that there is a particle in the support of $\omega(t)$ with $x_1$-coordinate at least $P_1(0) + V_P t$. Here, a constant lower bound $V_P$ can be explicitly given by
\begin{equation}\label{VP_half_plane}
V_P: = 10^{-3} \frac{E^{4}}{P_2(0)^3 m^{2} M}.    
\end{equation}
Although the details of the proof of \eqref{center_of_mass_half_plane} won't be presented in this paper, a similar estimate for the generalized surface quasi-geostrophic system in the half-plane will be given in \autoref{lem_drift_gsqg}.

Now we state our main results. The mechanism behind the linear growth is the presence of a sparse portion of vorticity whose average horizontal velocity is slower than the drift of the total center of mass, as observed in \cite{Guo2026} for 3D axisymmetric vortex ring setting.

\begin{theorem}\label{thm_diameter_growth_quadrant}
Assume that the initial vorticity $\omega_0$ is non-negative, 
has bounded support, and is not identically zero. 
If $\omega_0$ admits a decomposition $\omega_0 = \omega_{0,d} + \omega_{0,r}$, 
where $\omega_{0,d}$ and $\omega_{0,r}$ are non-negative, 
compactly supported in $\mathbb{R}_+^2$, and $\omega_{0,d}$ satisfies the following sparse condition: 
For some constant $0 \le C_0 < V_P$, $m_d := \|\omega_{0,d}\|_{L^1} > 0$, and
$$
\left\|\omega_{0,d} \right\|_{L^\infty} \le \left(\frac{C_0}{2m}\right)^{2} \left\|\omega_{0,d} \right\|_{L^1}, 
$$
then the diameter of $D(t):= \supp(\omega(t,\cdot)) \cap \mathbb{R}_+^2$ grows at least linearly:
$$
\diam(D(t)) \ge (V_P - C_0) t -O(1).
$$
\end{theorem}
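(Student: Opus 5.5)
Since the vorticity is transported, we follow the sparse part along the flow. Let $X(t,\cdot)$ be the Lagrangian flow map and set $\omega_d(t) := \omega_{0,d}\circ X(t)^{-1}$. Then $\omega_d(t)\ge 0$, $\omega_d(t)\le\omega(t)$, $\supp\omega_d(t)\subset D(t)$, and both $\|\omega_d(t)\|_{L^1}=m_d$ and $\|\omega_d(t)\|_{L^\infty}=\|\omega_{0,d}\|_{L^\infty}$ are conserved. We look at the horizontal center of mass of the sparse part,
$$Q_1(t):=\frac{1}{m_d}\int x_1\,\omega_d(t,x)\,\d x .$$
The plan is to show that $Q_1'(t)\le C_0$. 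We combine this with \eqref{center_of_mass_half_plane}, which gives $P_1(t)\ge P_1(0)+V_P t$. Support points then exist near $P_1$ and near $Q_1$. More precisely, there is a point of $D(t)$ with $x_1\ge P_1(t)$ and a point of $\supp\omega_d(t)\subset D(t)$ with $x_1\le Q_1(t)$. Hence
$$\diam D(t)\ge P_1(t)-Q_1(t)\ge (V_P-C_0)t-|P_1(0)-Q_1(0)|,$$
which is the claim.

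The key step is the velocity bound. We have $Q_1'(t)=\frac1{m_d}\int v_1(t,x)\,\omega_d(t,x)\,\d x$, and we need $|v(t,x)|\le C_0$ on the support of $\omega_d(t)$, or at least on average against $\omega_d$. For the half-plane kernel \eqref{kernel_half_plane}, $|K(x,y)|\le\frac{1}{2\pi}\bigl(\frac1{|x-y|}+\frac1{|x-\bar y|}\bigr)\le \frac{1}{\pi|x-y|}$, since $|x-\bar y|\ge|x-y|$ for $x,y\in\R^2_+$. This pointwise bound of $v$ only sees $\|\omega\|_{L^\infty}=M$, which is not controlled by the sparse hypothesis. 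So we should not bound $v$ pointwise. Instead we exploit the fact that the sparse density $\omega_d$ is the measure we integrate against, by swapping the roles of the two variables:
$$\Bigl|\int v_1\,\omega_d\,\d x\Bigr|\le \frac1\pi\int\!\!\int\frac{\omega(y)\,\omega_d(x)}{|x-y|}\,\d x\,\d y\le \frac{m}{\pi}\,\sup_y\int\frac{\omega_d(x)}{|x-y|}\,\d x .$$
For fixed $y$, the standard rearrangement bound (put the mass of $\omega_d$ in a disk around $y$ at height $\|\omega_d\|_\infty$) gives
$$\int\frac{\omega_d(x)}{|x-y|}\,\d x\le 2\sqrt{\pi}\,\|\omega_d\|_{L^1}^{1/2}\|\omega_d\|_{L^\infty}^{1/2}.$$
Dividing by $m_d$ yields
$$|Q_1'|\le \frac{2m}{\sqrt\pi}\Bigl(\frac{\|\omega_{0,d}\|_{L^\infty}}{\|\omega_{0,d}\|_{L^1}}\Bigr)^{1/2}\le\frac{2m}{\sqrt\pi}\cdot\frac{C_0}{2m}=\frac{C_0}{\sqrt\pi}\le C_0 .$$
This is exactly where the constant $(C_0/2m)^2$ in the sparse condition enters.

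I expect the main obstacle to be this step: making sure the estimate uses only the sparse part's $L^\infty/L^1$ ratio, with the total vorticity contributing only through $m$. The Fubini swap above is what accomplishes this. The remaining points are routine. First, we need to justify differentiating $Q_1$; this follows from the transport equation and compact support for Yudovich solutions, since supports stay bounded for finite time. Second, we need the extremal-point argument that some $x\in D(t)$ has $x_1\ge P_1(t)$, which holds because $P_1(t)$ is an $\omega$-weighted average over $D(t)$ with $\omega\ge0$. The same averaging argument gives a point of $\supp\omega_d(t)$ with $x_1\le Q_1(t)$.
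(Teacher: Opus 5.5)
Your proposal is correct and follows the paper's proof essentially step for step: you transport the sparse part by the full flow and differentiate its horizontal center of mass. You then bound $|K(x,y)|\le \frac{1}{\pi|x-y|}$ and integrate $\omega_d$ against $|x-y|^{-1}$ for fixed $y$, so that only $m$ and the ratio $\|\omega_{0,d}\|_{L^\infty}/\|\omega_{0,d}\|_{L^1}$ enter. Finally you compare a support point to the right of $P_1(t)\ge P_1(0)+V_Pt$ with one to the left of $P_{1,d}(t)\le P_{1,d}(0)+C_0t$.

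The only difference is that you use the sharp bathtub constant $2\sqrt{\pi}$ instead of the $(8\pi)^{1/2}$ of \autoref{lem_velocity_estimate}. This gives $C_0/\sqrt{\pi}$ rather than the paper's $(2/\pi)^{1/2}C_0$, and both are $\le C_0$. Your constant bookkeeping is also cleaner: the paper's intermediate factor $\frac{2}{\pi}$ must be read as $\frac{1}{\pi}$ for its stated bound to follow.
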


The following corollary is a direct consequence of the above theorem, which addresses the instability of \eqref{euler} in the odd symmetric setting. In various settings, the vorticity gradient is conjectured to grow generically; see \cite[Section 3]{Drivas2023} for a comprehensive review on this topic.

\begin{theorem}\label{thm_gredient_growth}
For each non-zero boundedly supported non-negative $\omega_0 \in L^1\cap L^\infty (\mathbb{R}_+^2)$ and $\eps > 0$, there exists a nonnegative $\tilde{\omega}_0 \in C_c^\infty (\mathbb{R}_+^2)$ such that $\|\tilde{\omega}_0 - \omega_0\|_{L^1} < \eps$, and the solution $\tilde{\omega}(t)$ to the 2D Euler system \eqref{euler} with initial data $\tilde{\omega}_0$ satisfies that, the following three quantities
$$
\liminf_{t\to\infty} \frac{\diam(\supp(\tilde{\omega}(t)))}{t} , 
\quad \liminf_{t\to\infty} \frac{\|\tilde{\omega}(t)\|_{C^\alpha}}{t^\alpha}\ (0<\alpha<1) , 
\quad \liminf_{t\to\infty} \frac{\|\nabla \tilde{\omega}(t)\|_{L^\infty}}{t} 
$$
are all positive.
Moreover, if $\omega_0$ is continuous, we can further require that $\|\tilde{\omega}_0 - \omega_0\|_{L^\infty} < \eps$.
\end{theorem}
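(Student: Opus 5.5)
The plan is to construct $\tilde\omega_0$ so that Theorem~\ref{thm_diameter_growth_quadrant} applies with a uniform constant $V_P - C_0 > 0$. Then pass from support growth to gradient growth through a compactness/ODE argument in which the solution stays bounded below near both ends of the support.

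\emph{Step 1: smoothing.} First mollify and truncate $\omega_0$ to obtain a nonnegative $\omega_{0,r} \in C_c^\infty(\mathbb{R}_+^2)$, not identically zero, with $\|\omega_{0,r} - \omega_0\|_{L^1} < \eps/2$. If $\omega_0$ is continuous, standard mollification of a uniformly continuous, compactly supported function also gives $\|\omega_{0,r} - \omega_0\|_{L^\infty} < \eps/2$. Let $m_r, M_r, E_r$ and $P_2^r(0)$ be its invariants. They are positive and finite.

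\emph{Step 2: add a sparse bump.} Fix a smooth nonnegative profile $\phi \ge 0$ with $\int \phi = 1$ and $\|\phi\|_{L^\infty} \le 1$, supported in the unit disc. Set
\[
\omega_{0,d}(x) = \delta R^{-2} \phi\bigl((x - x_0)/R\bigr),
\]
with $x_0 = (0, 2R)$ so that the support lies in $\mathbb{R}_+^2$. Then $m_d = \delta$ and $\|\omega_{0,d}\|_{L^\infty} \le \delta R^{-2}$. Put $\tilde\omega_0 = \omega_{0,r} + \omega_{0,d}$. Taking $\delta < \eps/2$ gives the $L^1$ closeness. For the $L^\infty$ statement we need $\delta R^{-2} < \eps/2$, which holds once $R$ is large.

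It remains to check the hypothesis of Theorem~\ref{thm_diameter_growth_quadrant}, namely
\[
R^{-2} \le \Bigl(\frac{C_0}{2m}\Bigr)^2
\]
for some $C_0 < V_P$, where $m = m_r + \delta$. This is the main obstacle, because $V_P$ depends on $\tilde\omega_0$, including the bump. Enlarging $R$ raises $P_2(0)$ only by $O(\delta R/m)$. Its contribution to $M$ is negligible, and its effect on $E$ can be controlled: $E$ only increases, since all terms are nonnegative. Indeed $E \ge E_r > 0$.

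So we choose $\delta = R^{-1/2}$, say. Then $P_2(0) \le P_2^r(0) + 3\delta R$, which is $O(R^{1/2})$. Hence
\[
V_P \gtrsim \frac{E_r^4}{(C + R^{1/2})^3 m^2 M},
\]
which decays like $R^{-3/2}$, while the required $C_0 = 2m/R$ decays like $R^{-1}$. This fails. The fix is to choose $\delta$ smaller, say $\delta = R^{-2}$. Then $P_2(0) = P_2^r(0) + O(R^{-1})$ stays bounded, so $V_P \ge c_r > 0$ uniformly in $R$, and $C_0 := 2m/R < c_r/2$ for $R$ large. Theorem~\ref{thm_diameter_growth_quadrant} then gives
\[
\diam(D(t)) \ge \tfrac{c_r}{2}\, t - O(1),
\]
which is the first liminf.

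\emph{Step 3: from diameter to gradients.} The odd extension has a diameter at least as large. For Hölder and gradient growth we use that the vorticity is transported. Positivity of the support means that points of $D(t)$ carry arbitrarily small values, so instead we track level sets. Pick $\lambda > 0$ such that $\{\tilde\omega_0 > \lambda\}$ contains a set $A$ of positive measure. On the odd extension, $\tilde\omega(t)$ equals $\ge\lambda$ on $\Phi_t(A)$ and $\le -\lambda$ on its reflection $\overline{\Phi_t(A)}$.

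We refine the argument of Theorem~\ref{thm_diameter_growth_quadrant} so that it yields a particle in $\Phi_t(A)$ at distance $\gtrsim t$ from a particle in $\Phi_t(A)$ near its boundary. Smooth $\tilde\omega$ vanishes on $\{x_2 = 0\}$ in the odd extension, and the full trajectory set is connected. Along any path between the two particles, $\tilde\omega$ must then go from $\lambda$ down to $0$ outside the support. This alone yields no rate.

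The cleaner route, which I would attempt, is to choose $\omega_{0,d}$ also to be of the form $\lambda\ch$-like smooth plateau pieces. The sparse piece is slow, with horizontal position $\le C_0 t + O(1)$, while the center of mass moves at rate $\ge V_P$. So some high-value particles separate by distance $\gtrsim t$. Combined with conservation of measure, some sublevel set must then be a thin filament of width $\lesssim 1/t$ containing values $\lambda$ and $0$ across it. This gives $\|\nabla \tilde\omega\|_{L^\infty} \gtrsim t$ and $\|\tilde\omega\|_{C^\alpha} \gtrsim t^\alpha$.

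I expect the precise filament-width bound in this last step to be the delicate part. I would try to bound the area of $\{\tilde\omega > \lambda/2\}$ by $\|\tilde\omega_0\|_{L^1}/(\lambda/2)$, apply an isoperimetric-type argument to a connected component joining far-apart particles, and then use the coarea formula.
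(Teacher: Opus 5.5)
Your Steps 1--2 are sound and give the first $\liminf$, though by a different perturbation from the paper's. You use a wide, far-away bump of tiny mass $\delta=R^{-2}$ with $M_d/m_d\le R^{-2}$, and $E\ge E_r$ together with $\delta R\to0$ keeps $V_P$ bounded below. The paper instead uses a plateau of height $a$ on a large region $W$ wrapped around the core.

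The gap is in Step~3, and it is structural, not just a missing estimate. Linear separation of two points where $\tilde\omega\ge\lambda$ forces small scales only if both points lie in one \emph{connected} component of a superlevel set. Connectedness is then preserved because $\Phi_t$ is a homeomorphism. \autoref{lem_geometric} (an $r$-neighbourhood of a connected set of diameter $d$ has area at least $2rd$), combined with the conserved measure of $\{\tilde\omega(t)>\lambda/2\}$, then gives $[\tilde\omega(t)]_{C^\alpha}\gtrsim t^\alpha$ and $\|\nabla\tilde\omega(t)\|_{L^\infty}\gtrsim t$.

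In your construction the bump lives in $B((0,2R),R)$, disjoint from $\supp\omega_{0,r}$. So no superlevel set $\{\tilde\omega_0>\lambda\}$, $\lambda>0$, has a component meeting both the bump and the core. Nothing you prove excludes two smooth blobs simply drifting apart at different speeds. Your proposed isoperimetric/coarea argument on ``a connected component joining far-apart particles'' cannot work, because no such component exists.

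The fix is to build the sparse part into one connected superlevel set together with the core, as the paper does. Take $\tilde\omega_0=(\ch_U(\omega_0+a))*\eta_\rho$ with $U\supset\supp\omega_0$ simply connected and $|U|\gtrsim (m/V_P)^2$. Then $\{\tilde\omega_0\ge a\}=U_\rho^\circ$ is connected and contains both the core and the plateau $W$ on which $\tilde\omega_0\equiv a$. Taking $\tilde\omega_0\ch_W$ as the sparse part, the slow particle has value exactly $a$.

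There is a second missing step. ``Some high-value particles separate'' needs the \emph{fast} particle to carry vorticity $\ge\lambda$. But $P_1'\ge V_P$ only yields a fast point of the support, which may carry arbitrarily small vorticity. The paper closes this by splitting the first moment. If every point with $x_1\ge\frac34P_1(t)$ had $\tilde\omega<a$, then
$\tilde m P_1(t)\le\frac34\tilde m P_1(t)+a\,|\{\tilde\omega_0>0\}|\,(R_0+V_m t)$.
This contradicts $P_1(t)\ge 0.99V_P t$ for large $t$ once $a$ is chosen small with the area fixed. This use of the smallness of the plateau height is absent from your plan.
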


Here, we do not require $\omega_0$ to be supported away from the boundary, which makes the result apply to some classical examples of travelling wave solutions, such as the Lamb-Chaplygin dipole, as background vorticity, and reproduces existing results like those in \cite{Choi2022b}.
The statement of \autoref{thm_gredient_growth} can actually be improved to gradient growth in the $L^1$ sense by the co-area formula and the linear diameter growth of the support. Since the construction requires additional details, we state it as a separate theorem.

\begin{theorem}\label{thm_gredient_integral_growth}
For every $\eps > 0$, each non-zero boundedly supported non-negative $\omega_0 \in L^1\cap L^\infty (\mathbb{R}_+^2)$ admits a nonnegative perturbation $\tilde{\omega}_0 \in C_c^\infty (\mathbb{R}_+^2)$ such that $\|\tilde{\omega}_0 - \omega_0\|_{L^1} < \eps$, and the solution $\tilde{\omega}(t)$ to the 2D Euler system \eqref{euler} with initial data $\tilde{\omega}_0$ satisfies that
$$
\|\nabla \tilde{\omega}(t)\|_{L^1} \ge Ct - O(1)
$$
for some constant $C>0$ and all $t\ge 0$.
\end{theorem}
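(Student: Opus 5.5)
We will build $\tilde\omega_0$ so that it satisfies the hypotheses of \autoref{thm_diameter_growth_quadrant} and also has a structure that makes linear diameter growth force linear growth of $\|\nabla\tilde\omega(t)\|_{L^1}$ through the co-area formula. The guiding observation is the following. If $\tilde\omega_0$ has a plateau, i.e.\ a set where it equals $h>0$, containing both a point that stays slow and a point near the center of mass, then every level set $\{\tilde\omega(t)=s\}$ with $s$ slightly below $h$ must eventually stretch over length comparable to $t$. Indeed, $\{\tilde\omega(t)>s\}$ is the image of $\{\tilde\omega_0>s\}$ under the area-preserving flow map. If $\{\tilde\omega_0>s\}$ is connected, its image is connected, and so it contains points that are far apart.

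\textbf{Step 1: the perturbation.} First mollify and truncate $\omega_0$ to get a smooth nonnegative $\omega_{0,r}$ with compact support in $\mathbb{R}_+^2$ that is $\eps/2$-close in $L^1$. Then add a thin, very spread-out layer $\omega_{0,d}$ of small height $\delta$ and small mass, arranged as follows. Let $\omega_{0,d}=\delta\,\chi$, where $\chi$ is a smooth cutoff equal to $1$ on a large connected region $U$ of area about $m_d/\delta$, with $U$ enclosing $\supp\omega_{0,r}$. Choosing $\delta\le (C_0/2m)^2 m_d$ with $C_0=V_P/2$ gives the sparse condition.

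Here $V_P$ depends on $E$, $m$, $M$ and $P_2(0)$ of $\tilde\omega_0$. Taking $m_d\to0$ while holding the ratio $m_d/\delta$ appropriately fixed keeps these quantities close to those of $\omega_{0,r}$, so $V_P$ stays bounded below. The one subtlety is that $U$ has large area, which raises $P_2(0)$ if $U$ extends high. To avoid this, take $U$ to be a wide, short horizontal strip of bounded height. Then $m_d$ small and $\delta$ tiny is consistent with area $m_d/\delta$ being huge in the horizontal direction only, and $P_2$ barely changes.

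\textbf{Step 2: co-area.} For a.e.\ $s\in(\delta/4,\delta/2)$, the set $\{\tilde\omega_0>s\}$ is a connected smooth domain containing $\supp\omega_{0,r}\cup U$, since $\omega_{0,r}\ge0$ adds only positive values on top of the plateau. Its image $\Omega_s(t)$ under the flow is connected, and its boundary is $\{\tilde\omega(t)=s\}$.

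Now use the mechanism behind \autoref{thm_diameter_growth_quadrant}. By \eqref{center_of_mass_half_plane}, some point of the support reaches $x_1\ge P_1(0)+V_Pt$. The sparse part, by contrast, contains a positive-measure set of slowly moving particles whose horizontal displacement stays below $C_0t+O(1)$. We need slightly more than $\operatorname{diam} D(t)$ itself: both the fast and the slow point should lie in $\Omega_s(t)$. I would rerun the argument from that proof (the averaging of $v_1$ against $\omega_d$) to obtain that a positive fraction of the $U$-mass remains in $\{x_1\le C_0t+O(1)\}$. For the fast side, a truncated center-of-mass argument should show that some point of $\{\tilde\omega(t)>s\}$ has $x_1\ge P_1(0)+V_Pt-O(1)$. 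This follows because the mass in $\{\tilde\omega\le s\}$ is at most $s\cdot|\supp|$, and a careful choice makes this negligible, or else the bound is applied directly to the full superlevel set.

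With both points in hand, connectedness of $\Omega_s(t)$ gives $\mathcal{H}^1(\partial\Omega_s(t))\ge (V_P-C_0)t-O(1)$. Integrating over $s$ by the co-area formula,
\[
\|\nabla\tilde\omega(t)\|_{L^1}\ge\int_{\delta/4}^{\delta/2}\mathcal{H}^1(\{\tilde\omega(t)=s\})\,\d s\ge \tfrac{\delta}{4}\big((V_P-C_0)t-O(1)\big).
\]

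\textbf{Main obstacle.} The delicate step is ensuring that the fast particle guaranteed by the center-of-mass drift lies inside the chosen superlevel set $\{\tilde\omega>s\}$, rather than being a particle that carries only low vorticity. The cleanest fix is to make $\tilde\omega_0$ take values in $\{0\}\cup(\delta/2,\infty)$ except on a thin transition layer of negligible mass $\eta$. Then the drift estimate, applied to the measure $\tilde\omega\ch_{\tilde\omega>s}$ with its error controlled by $\eta\cdot$ (the growth of the support, which is linear by Yudovich velocity bounds), still produces a point at distance $\ge V_Pt-O(\eta t)$. Choosing $\eta$ small relative to $V_P-C_0$ closes the argument.
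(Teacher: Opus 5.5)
Your proposal is correct and is essentially the paper's proof. It uses a low plateau $\delta\chi$ (the paper's $a\psi$) equal to $\delta$ on a large connected region $U$ enclosing the mollified background, so the sparse condition holds with $C_0$ a fixed fraction of $V_P$; connected superlevel sets $\{\tilde\omega(t)>s\}$ containing a fast point (from the truncated center-of-mass drift, with loss $O(\eta)\,t$ rather than $O(1)$, as you note at the end) and a slow point; and the co-area formula together with $\mathcal{H}^1(\partial E)\ge 2\diam E$ for bounded connected $E$. The two details you leave implicit are what the paper supplies: $\chi$ must be chosen with connected superlevel sets (a decreasing function of the normal coordinate near $\partial U$, i.e.\ \autoref{lem_level_set}), and the slow point should be taken from the sparse component restricted to the plateau, $\delta\ch_U$, so that it lies in $\{\tilde\omega(t)\ge\delta\}$, as the paper does with $\tilde\omega_0\ch_W$.
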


\subsection{Related Works} Now we briefly review the previous work on the long time behavior of vorticity for 2D Euler system, which is a classical topic in fluid dynamics and has been extensively studied in the literature. We discuss some topics that are closely related to our work.

A question naturally following the well-posedness is how the vorticity distribution evolves in time. 
Denote the support of $\omega(t)$ as $D(t)$. 
We are interested in the growth of the diameter of $D(t)$ as $t\to\infty$. 
It is well known that in the full plane case, 
Marchioro-Pulvirenti \cite{Marchioro1994} first established the upper bound of the diameter growth as $\diam(D(t)) = O(t^{1 / 3})$, 
which was later independently improved to $O(\left(t\log(1+t)\right)^{1 / 4})$ by Iftimie-Sideris-Gamblin \cite{Iftimie1999} 
and to $O(t^{1 / 4}\log\log\cdots\log t)$ by Serfati \cite{Serfati1998}. 

The presence of boundary effect may significantly change the behavior of vorticity. In \cite{Iftimie1999}, Iftimie-Sideris-Gamblin established a linear lower bound of the diameter growth for the odd-odd symmetric solutions, which is optimal in view of the velocity upper bound, see \autoref{lem_velocity_estimate}. Here the diameter is understood as the diameter of odd-odd symmetric support in the full plane. See also \cite{Iftimie2003} for the confinement result. Also in the half-plane setting, Choi-Jeong gave an example of linear diameter and gradient growth in \cite{Choi2022b}, which is obtained by perturbing the Lamb dipole, and apply the corresponding stability estimate. The first example of a confinement without symmetry was given by Zbarsky in \cite{Zbarsky2021}, by analyzing global-in-time dynamics of vorticity near a self-similar point vortices.

Another important motivation for the confinement problem is the filamentation of vortex rings in 3D axisymmetric Euler system. Very recently, Guo-Jeong-Zhao in \cite{Guo2026} proved universal linear-in-time filamentation for 3D axisymmetric vortex rings. 
They demonstrated that if a portion of the initial vorticity is distributed beyond a critical geometric distance from the vortex core, 
the weak local induction fails to match the rapid translation speed of the core, inevitably causing the trailing vorticity to stretch linearly along the symmetry axis. This mechanism is a main ingredient in our proof.

The growth rate of vorticity gradient is another important question. After the seminal work of Kiselev-Šverák \cite{Kiselev2014} on the double exponential growth of vorticity gradient for 2D Euler system in the disk, there have been plenty of works on this topic, see for example \cite{Denisov2015, Kiselev2016} for the double exponential growth of vorticity gradient in different bounded domains. Zlatoš, in \cite{Zlatos2025}, established the double exponential growth of vorticity gradient for 2D Euler system in the half-plane (odd-odd symmetric setting actually) by a delicate construction. Also a recent work on superlinear growth is given in \cite{Jeong2025a}.
Very recently, a breakthrough work \cite{Alazard2026} by Alazard-Said established the $C^k$ norm growth of vorticity for generic initial data in $\T^2$ and $\R^2$ with zero mean settings, which states that the initial data with infinite $C^k$ norm growth forms a dense $G_\delta$ set in the corresponding function space.

\subsection{Notations}

Now we declare the notations we will use in the following context. 
We denote by $\R_+^2:=\{x=(x_1, x_2)\in \R^2: x_2 > 0\}$ the half-plane, and for any point $y=(y_1, y_2)$, we define its reflections $\bar y := (y_1, -y_2)$.
For the evolving vorticity $\omega(t,\cdot)$, its support in the half-plane is denoted by $D(t) := \supp(\omega(t,\cdot))\cap \R_+^2$, and we measure its spatial extent using its diameter $\diam(D(t))$. 
We use the notation $\supp \omega_0 \Subset \R_+^2$ to indicate that the support is a compact subset strictly contained in the open half-plane $\R_+^2$.
We denote standard H\"older semi-norms by $[f]_{C^\alpha}$, and the particle flow map of the velocity field $v$ by $\Phi_t$. 

Finally, throughout this section, $C$ denotes a constant which may change from line to line, but is independent of $\omega_0$ and $t$. We also denote $X\ls Y$ or $X=O(Y)$ as $|X|\le CY$. Both notations may contain more dependencies denoted in the subscript. Notice that $O(1)$ usually denotes a quantity whose absolute value is bounded by a constant depending solely on the initial profile $\omega_0$.

\section{Preliminaries}

The following lemma is a basic estimate for Biot-Savart kernel, which shows that the velocity field is bounded for $L^1 \cap L^\infty$ vorticity. See for example \cite[Lemma 2.1]{Iftimie1999} for a proof.

\begin{lemma}\label{lem_velocity_estimate}
For $f\in L^1 \cap L^\infty(\R^2)$, we have
$$
\left|\int_{\R^2} \frac{f(y)}{|x-y|}\,\mathrm{d}y\right| \le (8\pi)^{1 / 2} \|f\|_{L^1}^{1 / 2}\|f\|_{L^\infty}^{1 / 2}.
$$
\end{lemma}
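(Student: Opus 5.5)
The statement to prove is \autoref{lem_velocity_estimate}. The plan is the standard splitting of the integral into a near part and a far part around $x$, followed by optimizing the splitting radius. Fix $x\in\R^2$ and $R>0$, and bound
$$
\left|\int_{\R^2}\frac{f(y)}{|x-y|}\,\d y\right|\le \int_{|x-y|<R}\frac{|f(y)|}{|x-y|}\,\d y+\int_{|x-y|\ge R}\frac{|f(y)|}{|x-y|}\,\d y .
$$

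\emph{Near part.} Estimate $|f|$ by $\|f\|_{L^\infty}$ and integrate the kernel in polar coordinates:
$$
\int_{|x-y|<R}\frac{\d y}{|x-y|}=2\pi R,
$$
so the near part is at most $2\pi R\|f\|_{L^\infty}$.

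\emph{Far part.} Use $|x-y|^{-1}\le R^{-1}$, which bounds the far part by $R^{-1}\|f\|_{L^1}$.

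\emph{Optimization.} Minimize $2\pi R\|f\|_{L^\infty}+R^{-1}\|f\|_{L^1}$ over $R>0$. If $f=0$ a.e. there is nothing to prove, so assume $\|f\|_{L^\infty}>0$. The minimizer is
$$
R=\left(\frac{\|f\|_{L^1}}{2\pi\|f\|_{L^\infty}}\right)^{1/2},
$$
and the minimum value is $2(2\pi)^{1/2}\|f\|_{L^1}^{1/2}\|f\|_{L^\infty}^{1/2}=(8\pi)^{1/2}\|f\|_{L^1}^{1/2}\|f\|_{L^\infty}^{1/2}$. This is exactly the claimed constant.

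There is no real obstacle here. The only point needing care is the degenerate case $\|f\|_{L^\infty}=0$, which was handled above before choosing $R$. One could also sharpen the constant with a rearrangement argument: put mass $\|f\|_{L^1}$ at height $\|f\|_{L^\infty}$ in a disk centered at $x$. That is unnecessary, since the crude split already yields $(8\pi)^{1/2}$.
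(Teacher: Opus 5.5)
Your proof is correct and is the standard argument: split at radius $R$ around $x$, bound the near part by $2\pi R\|f\|_{L^\infty}$ and the far part by $R^{-1}\|f\|_{L^1}$, then optimize in $R$. The paper gives no proof of its own and simply cites \cite[Lemma 2.1]{Iftimie1999}; the constant $(8\pi)^{1/2}$ stated there is exactly what your optimization yields, so your argument matches the intended one.
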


In the following context, we denote 
$$
V_m := (8\pi)^{1 / 2} m^{1 / 2} M^{1 / 2}
$$
to be the upper bound of the velocity field given by \autoref{lem_velocity_estimate}: $|v(t,x)|\le V_m$ for all $t\ge 0$ and $x\in \R_+^2$ (in fact, the velocity field is bounded by $V_m$ in the whole plane $\R^2$ since we can extend $\omega$ to be an odd-odd function in $\R^2$).

In the analysis of regularity of vorticity, we will need the following geometric lemma.

\begin{lemma}\label{lem_geometric}
Assume that for a $C^\alpha$ function $f$ ($0<\alpha\le1$, where $\alpha=1$ stands for Lipschitz continuity), there exists $a\in\R$ such that the level set $E_a:=\{x: f(x) > a\}$ is $C^1$ and connected, and $E_b:=\{x: f(x) > b\}$ has finite Lebesgue measure for some $b<a$.
Then 
$$
[f]_{C^\alpha} \ge (a-b)\left(2\frac{\operatorname*{diam}(E_a)}{|E_b|}\right)^\alpha.
$$
\end{lemma}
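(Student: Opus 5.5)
The plan is to turn the H\"older bound into a statement about neighbourhoods. Since $f$ is H\"older, it cannot drop from above $a$ to $b$ within a short distance. So a whole neighbourhood of $E_a$ of a fixed width sits inside $E_b$. Connectedness of $E_a$ then forces that neighbourhood to have area at least about $2r\cdot\diam(E_a)$, and comparing with $|E_b|$ gives the inequality.

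\textbf{Step 1 (degenerate cases).} Write $H:=[f]_{C^\alpha}$ and $d:=\diam(E_a)$. We may assume $E_a\neq\emptyset$ and $d>0$; otherwise the right-hand side is zero or there is nothing to prove. We may also assume $H<\infty$. If $H=0$, then $f$ is constant and larger than $a>b$ on $E_a\neq\emptyset$, so $E_b=\R^2$. This contradicts $|E_b|<\infty$, so $H>0$.

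\textbf{Step 2 (a neighbourhood of $E_a$ lies in $E_b$).} Set $r:=\bigl((a-b)/H\bigr)^{1/\alpha}$. Take $x\in E_a$ and $|y-x|<r$. Then
\[
f(y)\ge f(x)-H|x-y|^\alpha > a-Hr^\alpha = a-(a-b)=b .
\]
Hence the open $r$-neighbourhood $N_r(E_a):=\{y:\dst(y,E_a)<r\}$ is contained in $E_b$. In particular $|N_r(E_a)|\le|E_b|<\infty$.

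\textbf{Step 3 (area of the neighbourhood, the main step).} I claim $|N_r(E_a)|\ge 2r\ell$ for any $\ell<d$. Choose $p,q\in E_a$ with $|p-q|>\ell$, let $e=(q-p)/|q-p|$, and let $\pi(x)=x\cdot e$.
\begin{itemize}
\item Since $E_a$ is connected and $\pi$ is continuous, $\pi(E_a)$ is an interval. It contains $\pi(p)$ and $\pi(q)$, so its length is at least $|p-q|>\ell$.
\item For each $s\in\pi(E_a)$, pick $x_s\in E_a$ with $\pi(x_s)=s$. The open segment $\{x_s+\tau e^\perp:|\tau|<r\}$ lies in $N_r(E_a)$. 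So the fibre $N_r(E_a)\cap\pi^{-1}(s)$ has one-dimensional measure at least $2r$.
\item By Fubini in the orthonormal coordinates $(e,e^\perp)$, and since $N_r(E_a)$ is open and hence measurable, $|N_r(E_a)|\ge 2r\,|\pi(E_a)|\ge 2r\ell$.
\end{itemize}
Letting $\ell\uparrow d$ gives $|E_b|\ge 2rd$. In particular $d<\infty$, because $|E_b|<\infty$ and $r>0$.

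\textbf{Step 4 (conclusion).} From $r\le |E_b|/(2d)$ we get $(a-b)/H\le \bigl(|E_b|/(2d)\bigr)^\alpha$. Rearranging,
\[
H\ge (a-b)\left(\frac{2d}{|E_b|}\right)^\alpha ,
\]
which is the claim. The case $\alpha=1$ is identical. The $C^1$ regularity of $\partial E_a$ is not needed beyond measurability. The only real content is Step 3: connectedness is used exactly to ensure the projection fills an interval, so that fibres exist over its whole length.
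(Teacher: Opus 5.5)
Your proof is correct and follows the paper's argument. The H\"older bound places a width-$r$ neighbourhood of $E_a$ inside $E_b$, connectedness makes the projection of $E_a$ onto a near-diametral direction an interval, and Fubini over the perpendicular fibres gives $|E_b|\ge 2r\,\diam(E_a)$. Your version is somewhat more careful than the paper's (open neighbourhood, degenerate cases handled, only connectedness rather than simple connectedness used), but the route is the same.
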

\begin{proof}
Note that the H\"{o}lder continuity suggests that $E_b$ contains a $\delta-$neighborhood of $E_a$ with $\delta = \left(\frac{a-b}{[f]_{C^\alpha}}\right)^{1 / \alpha}$. Hence, $$
|E_b| \ge |\{x: \dst(x, E_a) \le \delta\}|.
$$
To evaluate the right hand side, we find, for any $\eps>0$, a direction $e$ such that the projection of $\pi_{e}(E_a)$ in the direction $e$ has length at least $\operatorname*{diam}(E_a) - \eps$. Since $E_a$ is simply connected, the projection of $E_a$ in the direction $e$ is an interval, hence for each point $s\in [0,\operatorname*{diam}(E_a) - \eps]$ in the projection, a point $x_s \in E_a$ is projected to it, and hence the segment $\{x_s + te^\perp: t\in[-\delta, \delta]\}$ is contained in $\{x: \dst(x, E_a) \le \delta\}$. 
This implies by Fubini Theorem that $$
|E_b| \ge |\{x_s + te^\perp: s\in [0,\operatorname*{diam}(E_a) - \eps], t\in[-\delta, \delta]\}| = 2\delta (\operatorname*{diam}(E_a) - \eps).
$$
We conclude the proof by letting $\eps\to 0$ and rearranging the above inequality.
\end{proof}

\section{Growth of the Support and Small-Scale Creation}

We begin with the proof of \autoref{thm_diameter_growth_quadrant}.

\begin{proof}[Proof of \autoref{thm_diameter_growth_quadrant}]
First we establish a key observation: the horizontal center of mass for the sparse component moves slower than $V_P$. Recall that the Biot-Savart kernel satisfies
$$
| K(x,y) | \le \frac{1}{2\pi} \left(\frac{1}{|x-y|} + \frac{1}{|x-\bar{y}|}\right) \le \frac{1}{\pi} \frac{1}{|x-y|}.
$$
We denote $M_d = \|\omega_{0,d}\|_{L^\infty}$. The horizontal center of mass for the sparse part is defined as $P_{1,d}(t) = \frac{1}{m_d} \int_{\R_+^2} x_1 \omega_d(t,x) \mathrm{d}x$. Using the transport nature of $\omega_d(t)$ and the fact that $v$ is divergence free, we have
$$
\begin{aligned}
P_{1,d}'(t) &= \frac{1}{m_d} \int_{\R_+^2} x_1 \p_t \omega_d(t,x)\,\mathrm{d}x 
= -\frac{1}{m_d} \int_{\R_+^2} x_1 \nabla \cdot \left[ v(t,x) \omega_d(t,x)\right]\,\mathrm{d}x \\
&= \frac{1}{m_d}\int_{\R_+^2} v_1(t,x) \omega_d(t,x)\,\mathrm{d}x 
= \frac{1}{m_d}\int_{\R_+^2} \int_{\R_+^2} K_1(x,y) \omega(t,y)\omega_d(t,x)\,\mathrm{d}y\,\mathrm{d}x \\
&\le \frac{2}{\pi m_d}\int_{\R_+^2} \int_{\R_+^2} \frac{1}{|x-y|} \omega(t,y)\omega_d(t,x)\,\mathrm{d}y\,\mathrm{d}x \\
&\le 2\left(\frac{2}{\pi}\right)^{1 / 2} m\left(\frac{M_d}{m_d}\right)^{1 / 2} \le C_0,
\end{aligned}
$$
where we apply \autoref{lem_velocity_estimate} and the sparse condition. 

The inequality $P_{1,d}'(t) \le C_0$ implies the existence of a particle in the support of $\omega_d(t)$ with its $x_1$-coordinate at most $P_{1,d}(0) + C_0 t \le C_0 t + O(1)$. On the other hand, $P_1'(t) \ge V_P$ guarantees there is a particle in the support of $\omega(t)$ with its $x_1$-coordinate at least $P_1(0) + V_P t \ge V_P t - O(1)$. The difference yields the lower bound for the diameter:
$$
\diam(D(t)) \ge (V_P - C_0) t - O(1).
$$
This concludes the proof of \autoref{thm_diameter_growth_quadrant}.
\end{proof} 

Now we turn to constructing smooth perturbations to prove \autoref{thm_gredient_growth}. 

\begin{proof}[Proof of \autoref{thm_gredient_growth}]
Assume first $D_0:= \supp(\omega_0) \Subset Q$. To secure sufficient space for the sparse component, we choose a smooth simply connected domain $U \Subset Q$ such that $D_0 \Subset U$, with its area bounded below by
$$
|U| \ge |D_0| + 2^9 \left( \frac{m}{V_P} \right)^2.
$$
Take the mollification scale $\rho > 0$ small enough such that $\dst(D_0, U^c) > 10\rho$, and the shrunk region $U_\rho^\circ := \{x\in U: \dst(x, U^c) \ge \rho\}$ along with the expanded core $D_\rho := \{x\in Q: \dst(x, D_0) \le \rho\}$ leaves a gap $W := U_\rho^\circ \setminus D_\rho$ with area
$$
|W| \ge 2^8 \left(\frac{m}{V_P}\right)^2.
$$
We define $\omega_1(x) = \ch_U(x) (\omega_0(x)+a)$ for a small amplitude $a>0$, and set the modified initial profile $\tilde{\omega}_0 = \omega_1 * \eta_\rho$, where $\eta_\rho$ is a standard smooth non-negative mollifier supported in $B_\rho:=B(0,\rho)$. 

Since $D_0 \Subset U$ and $\dst(D_0, U^c) > 10\rho$, by properties of convolution, if $x \notin U_\rho^\circ$, then $B(x,\rho) \cap \supp(\omega_0) = \varnothing$ and the fraction of the ball $B(x,\rho)$ intersecting $U$ is strictly less than $1$ (for $x \in \p U_\rho^\circ$) or zero (for $x$ further away). Thus $\tilde{\omega}_0(x) < a$. Meanwhile, for $x \in W = U_\rho^\circ \setminus D_\rho$, the ball $B(x,\rho)$ is entirely contained in $U \setminus D_0$, on which $\omega_1 \equiv a$. Hence $\tilde{\omega}_0(x) = a$ on $W$.
Under the stated positivity assumptions, this precise construction yields
$$
\{\tilde\omega_0 \ge a\} = U_\rho^\circ.
$$

We can ensure $\|\tilde{\omega}_0 - \omega_0\|_{L^1} < \eps$ provided $a$ and $\rho$ are small enough.
As $a, \rho \to 0$, we have the total mass $\tilde m \to m$ and $V_P[\tilde{\omega}_0] \to V_P[\omega_0] := V_P$. This is guaranteed by convergence of not only the $L^1$ norm, center of mass, but also the energy (since our perturbation is supported in a compact set and the kernel is locally integrable).

Fixing $\rho, a$ suitably small guarantees that 
$$
\frac{\tilde{m}}{m}, \frac{V_P[\tilde{\omega}_0]}{V_P} \in [0.99, 1.01].
$$
For the chosen $W$ on which $\tilde{\omega}_0 \equiv a$, we designate the sparse component $\tilde\omega_{0,d} := \tilde\omega_0 \ch_W$. Its amplitude is $\|\tilde\omega_{0,d}\|_{L^\infty} = a$, and its mass is $\|\tilde\omega_{0,d}\|_{L^1} = a|W|$. Thus,
$$
\|\tilde\omega_{0,d}\|_{L^\infty} \le \left(\frac{V_P[\tilde{\omega}_0]/4}{4\tilde m}\right)^2 \|\tilde\omega_{0,d}\|_{L^1},
$$
where we used the ratio bounds to absorb the constants. This precisely satisfies the sparse condition with $C_0 = V_P[\tilde{\omega}_0] / 4$. 
Applying the argument of \autoref{thm_diameter_growth_quadrant} to $\tilde{\omega}$, the support diameter grows at least linearly: 
$$
\diam(\supp(\tilde{\omega}(t))) \ge (V_P[\tilde{\omega}_0] - C_0) t - O(1) \ge \frac{3}{4} (0.99 V_P) t - O(1).
$$

The fast particle phenomenon is formulated for large time. By definition $$P_1[\tilde{\omega}](t) = \frac{1}{\tilde m} \int_Q x_1 \tilde{\omega}(t,x)\,\mathrm{d}x \ge V_P[\tilde{\omega}_0] t \ge 0.99 V_P t$$ for all $t$. If we assume that, for a sequence $t_j \to \infty$, all particles in $\{ x_1 \ge \frac{3}{4} P_1[\tilde{\omega}](t_j) \}$ have vorticity strictly less than $a$, then we can bound the horizontal mass by splitting the integral:
$$
\begin{aligned}
\tilde{m} P_1[\tilde{\omega}](t_j) &= \int_{\{x_1 < \frac{3}{4}P_1[\tilde{\omega}](t_j)\}} x_1 \tilde{\omega}(t_j,x)\,\mathrm{d}x + \int_{\{x_1 \ge \frac{3}{4}P_1[\tilde{\omega}](t_j)\}} x_1 \tilde{\omega}(t_j,x)\,\mathrm{d}x \\
&\le \frac{3}{4} P_1[\tilde{\omega}](t_j) \tilde{m} + \int_{\{ \tilde{\omega}(t_j,x) < a \}} x_1 \tilde{\omega}(t_j,x)\,\mathrm{d}x \\
&\le \frac{3}{4}\tilde{m} P_1[\tilde{\omega}](t_j) + a |U_\rho^\circ| (R_0 + V_{m} t_j),
\end{aligned}
$$
where $R_0 = \sup_{x\in U} x_1$ and $V_m$ is the uniform velocity bound. Since the area of the support $|U_\rho^\circ|$ is conserved and $P_1[\tilde{\omega}](t_j)$ grows at least linearly as $0.99 V_P t_j$, this yields a contradiction for large $t_j \ge T_0$ if we choose the amplitude $a$ small enough (independently of $t$) such that $a |U_\rho^\circ| V_m < \frac{1}{4}\tilde{m} (0.99 V_P)$.
Thus, for all $t\ge T_0$, there is a particle $x_f$ such that $\tilde{\omega}(t, x_f) \ge a$ and $x_{f,1} \ge \frac34 P_1[\tilde{\omega}](t) \ge \frac{3}{4}(0.99 V_P) t$.
On the other hand, the slow particle $x_s$ with $\tilde{\omega}(t,x_s)=a$ and $x_{s,1} \le C_0 t + O(1) \le \frac{1}{4}(1.01 V_P) t + O(1)$ is guaranteed by the sparse component velocity estimate.
This ensures $x_f$ and $x_s$ diverge linearly.

With the two particles $x_s$ and $x_f$, we obtain a large diameter of the $a$-level set $E_a(t) = \{\tilde{\omega}(t)\ge a\}$. Since $\tilde{\omega}(t)$ is transported by a divergence-free flow, the measure of the lower level set $E_{a/2}(t) = \{\tilde{\omega}(t) > a/2\}$ is conserved and finite. An immediate application of \autoref{lem_geometric} provides the lower bound for $\|\tilde{\omega}(t)\|_{C^\alpha}$ and $\|\nabla \tilde{\omega}(t)\|_{L^\infty}$ which scales at least linearly in $t$.

Now we discuss the case when $\supp\omega_0$ touches the boundary. This is not a big problem since we can first perturb $\omega_0$ to be supported away from the boundary like $\omega_0 \ch_{\{x_2 > 10 \rho\}}$, and then apply the above construction. This won't change the smallness of the perturbation in various senses, and the key quantities $m,M,P_2(0),E,V_P$ are only slightly changed, which will not affect the linear growth mechanism.
\end{proof}

The above construction can be further refined to yield the $L^1$-gradient growth by using the co-area formula and the linear growth of the diameter of the support. The key point is to ensure that all level sets $E_\lambda(t) = \{\tilde{\omega}(t) > \lambda\}$ for $0<\lambda<a$ are simply connected, so that their perimeter can be bounded below by their diameter. This is guaranteed by the following lemma, which allows us to construct a smooth function with simply connected superlevel sets, whose proof we postpone.

\begin{lemma}\label{lem_level_set}
For any smooth simply connected domain $U \Subset Q$, there exists a smooth compactly supported function $\psi\in C_c^\infty(Q)$ such that $0 \le \psi \le 1$, $\psi\equiv 1$ on $U$, and for every $0<c<1$, the superlevel set $\{\psi > c\}$ is simply connected.
\end{lemma}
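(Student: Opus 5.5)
Since $U$ is a smooth simply connected bounded domain in the plane, $\overline{U}$ is diffeomorphic to the closed unit disk. By the Riemann mapping theorem together with Kellogg--Warschawski, the conformal map extends smoothly to the boundary. A tubular neighbourhood argument then gives the following: for some small $\delta>0$, the outer collar $N_\delta:=\{x\notin U:\dst(x,U)<\delta\}$ is foliated by smooth curves isotopic to $\p U$. Concretely, choose a smooth defining function $d$ equal to the signed distance to $\p U$ near $\p U$, with $d<0$ in $U$. For $\delta$ small, $d$ is smooth with $|\nabla d|=1$ on $\{|d|<2\delta\}$, and we also take $\delta<\dst(U,Q^c)/2$ so everything stays inside $Q$.

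The plan is to define $\psi$ as a function of $d$ alone. Set $\psi(x)=\chi(d(x))$, where $\chi\in C^\infty(\R)$ satisfies $\chi\equiv1$ on $(-\infty,0]$, $\chi\equiv0$ on $[\delta,\infty)$, and $\chi$ is strictly decreasing on $(0,\delta)$. (For instance, take $\chi(s)=\frac{g(\delta-s)}{g(\delta-s)+g(s)}$ with $g(s)=e^{-1/s}\ch_{s>0}$.) Outside the collar we have $d\ge\delta$ or $x\in\overline U$, so $\psi$ is well defined and smooth on $\R^2$: it is $1$ on $U$, $0$ outside $\{d<\delta\}$, and compactly supported in $Q$ with $0\le\psi\le1$.

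For $0<c<1$ we get $\{\psi>c\}=\{d<s_c\}$, where $s_c=\chi^{-1}(c)\in(0,\delta)$, by strict monotonicity. It remains to show that $\{d<s\}$ is simply connected for $0<s<\delta$. The cleanest route is to use the normal flow. The map $F:\p U\times(-2\delta,2\delta)\to\R^2$, $F(p,t)=p+t\nu(p)$, is a diffeomorphism onto $\{|d|<2\delta\}$ with $d(F(p,t))=t$. Then the flow of the vector field $\nabla d$, cut off appropriately, deformation retracts $\{d<s\}$ onto $\{d\le 0\}=\overline U$. More simply, $\{d<s\}=\overline U\cup F(\p U\times[0,s))$ is homeomorphic to $\overline U$ with an open collar attached, so it is homotopy equivalent to $\overline U$. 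Since $\overline U$ is a closed disk, it is contractible, and $\{d<s\}$ is a connected open set homotopy equivalent to a disk, hence simply connected.

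The main obstacle is the global smoothness and injectivity of the signed distance in the collar. This requires the normal-injectivity radius of the compact smooth curve $\p U$ to be positive, which is standard (tubular neighbourhood theorem) once $\p U$ is $C^2$. A second point is that $U\Subset Q$ must leave room for the collar. If the paper's $Q$ is the half-plane, this is automatic by compactness, since $\dst(\overline U,Q^c)>0$.
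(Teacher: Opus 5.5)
Your proposal is correct and follows essentially the same route as the paper. You compose a cutoff $\chi$ that is strictly decreasing on $(0,\delta)$ with the signed normal coordinate (your signed distance $d$) from the tubular neighbourhood theorem, and identify each $\{\psi>c\}$ as $\overline U$ with the outward collar $F(\partial U\times[0,s_c))$ attached, hence simply connected. The appeal to the Riemann mapping and Kellogg--Warschawski theorems is superfluous, since only the tubular neighbourhood of the compact $C^2$ curve $\partial U$ and the simple connectivity of $U$ are used.
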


\begin{proof}[Proof of \autoref{thm_gredient_integral_growth}]
For \autoref{thm_gredient_integral_growth}, we need all level sets $E_\lambda(t)$ for $0<\lambda<a$ to be simply connected.
By \autoref{lem_level_set}, we choose a smooth compactly supported function $\psi\in C_c^\infty$ satisfying $\psi\equiv 1$ on $U$ with all $\{ \psi > c \}$ simply connected. We construct
$$
\tilde{\omega}_0 = \omega_0 * \eta_\rho + a\psi.
$$
For $0 < \lambda < a$, noting that $\omega_0 * \eta_\rho$ is supported in $U$ and $\psi=1$ on $U$, the strict superlevel sets $\{ \tilde{\omega}_0 > \lambda \} = U \cup \{\psi > \lambda / a\}$ are homotopically equivalent to $\{\psi > \lambda / a\}$ and thus simply connected.

For evaluating the $L^1$ norm of the gradient, we use the co-area formula for smooth compactly supported functions:
$$
\int_{Q}|\nabla \tilde{\omega}(t,x)|\,\mathrm{d}x = \int_0^\infty \operatorname{Per}(E_\lambda) \,\mathrm{d}\lambda \ge \int_0^a \operatorname{Per}(E_\lambda) \,\mathrm{d}\lambda.
$$
For almost every regular value $\lambda$, we have $\operatorname{Per}(E_\lambda) = \mathcal{H}^1(\partial E_\lambda)$. For connected bounded sets, $\operatorname{Per}(E_\lambda) \ge 2\diam(E_\lambda)$. Thus the integral is at least $2\int_0^a \diam(E_\lambda(t)) \,\mathrm{d}\lambda \ge 2 a \diam(E_a(t))$, which ensures the $L^1$-gradient growth.
\end{proof}

\begin{proof}[Proof of \autoref{lem_level_set}]
By the tubular neighborhood theorem, there exists $\delta>0$ such that the outward normal map
$$
\Phi:\partial U\times(-\delta,\delta)\to \mathbb{R}^2,
\qquad
\Phi(p,t)=p+t\nu(p),
$$
is a diffeomorphism onto a neighborhood of $\partial U$. Here $\nu$ denotes the outward unit normal to $\partial U$.
Choose $0<\varepsilon<\delta$ sufficiently small so that
$\overline{U_\varepsilon}\subset V$,
where
$
U_\varepsilon
=
U\cup \Phi\bigl(\partial U\times[0,\varepsilon)\bigr)
$.
Let $\phi$ be the signed normal coordinate in the tubular neighborhood, so that $\phi(\Phi(p,t))=t$.
Thus $\phi<0$ on the $U$-side of $\partial U$, $\phi=0$ on $\partial U$, and $\phi>0$ on the exterior side.

Choose $\chi\in C^\infty(\mathbb{R};[0,1])$ such that
$$
\chi=1 \quad \text{on } (-\infty,0],
\qquad
\chi=0 \quad \text{on } [\varepsilon,\infty),
\qquad
\chi'<0 \quad \text{on } (0,\varepsilon).
$$
Define $\psi=\chi\circ \phi$ in the tubular neighborhood, extend it by $1$ on $U$, and by $0$ outside $U_\varepsilon$. Since $\chi$ is constant near the gluing regions, this gives a smooth function $\psi\in C_c^\infty(V)$. Clearly $0\le \psi\le 1$ and $\psi\equiv 1$ on $U$.

Now fix $0<c<1$. Since $\chi$ is strictly decreasing on $(0,\varepsilon)$, there is a unique $t_c\in(0,\varepsilon)$ such that
$\chi(t_c)=c.$
Hence
$$
\{\psi>c\}
=
U\cup \Phi\bigl(\partial U\times[0,t_c)\bigr).
$$
This set is obtained from $U$ by attaching a thin outward collar. Therefore it is diffeomorphic to $U$, and hence is simply connected. 
\end{proof}

\begin{remark}
The perturbation constructed in the proof of \autoref{thm_gredient_growth} can actually be made to be arbitrarily small in $C^k$ sense for any $k\ge 0$, provided that $\omega_0 \in C^k$ by adjusting the amplitude $a$ of the weak part. However, it is still out of our reach to construct a perturbation $\tilde{\omega}_0$ near a given $\omega_0$ such that the support of $\tilde{\omega}_0 - \omega_0$ has small measure, which means our perturbation is not a local one. 
For a background datum $\omega_0$ that is close to a point vortex, the area we require for the support could be very large.

Indeed a local perturbation may lead to stability as suggested in \cite{Choi2024c}, where a stability result is given for nearly point-vortex initial datum with some additional symmetry assumptions on configuration. 
See for example \cite{Marchioro1994, Zbarsky2021, Meyer2025a} and the references therein for more discussions on point vortex configuration problem, and \cite{Butta2025, Donati2025a} for similar problem in the context of 3D axisymmetric Euler system.
\end{remark}

\section{Extensions}

\subsection{The Quadrant Case}

For the special case of odd-odd symmetry, that is, $$
\omega_0(x_1,x_2)=-\omega_0(-x_1,x_2)=-\omega_0(x_1,-x_2)=\omega_0(-x_1,-x_2),
$$
from uniqueness of Yudovich solutions, $\omega(t,x)$ preserves the odd-odd symmetry for all time. Denote $$
Q:=\{x=(x_1, x_2)\in \R^2: x_1 > 0, x_2 > 0\},
$$
then any odd-odd solution can be considered as the solution in the first quadrant $Q$ with slip boundary condition, i.e., $v\cdot n = 0$ on $\p Q$.
The corresponding Biot-Savart kernel then becomes 
\begin{equation}
\label{kernel_quadrant}
K(x,y)=\frac{1}{2\pi}\left(\frac{(x-y)^\perp}{|x-y|^2}-\frac{(x-\bar{y})^\perp}{|x-\bar{y}|^2}-\frac{(x+\bar{y})^\perp}{|x+\bar{y}|^2}+\frac{(x+y)^\perp}{|x+y|^2}\right).
\end{equation}
The parallel estimate of mass center in the quadrant can be found in \cite{Iftimie1999}, which is given by
$$
P_1'(t) \ge 10^{-3} \frac{E^{6}}{m^{10} M P_2(0)^3}, P_2'(t) \le 0.
$$
This implies that the mass center moves at least linearly, which leads to the linear growth of the diameter of support by the same argument as in the proof of \autoref{thm_diameter_growth_quadrant}, and hence yields the parallel results on the growth of the vorticity gradient (\autoref{thm_gredient_growth} and \autoref{thm_gredient_integral_growth}). However, the construction can be much simpler in the quadrant case since we can let the support touch the $x_2-$axis, and the symmetry nature of odd-odd solutions keeps the support in the same axis, which directly yields the linear growth.

\subsection{gSQG System in the Half-Plane}

The generalized surface quasi-geostrophic (gSQG) system is another important 2D active scalar model, which is given for each $0\le \alpha\le 1 / 2$ by

\begin{equation}\label{gsqg}\tag{gSQG}
\left\{
\begin{aligned}
\partial_t \theta+v\cdot \nabla \theta = 0 &\text{ in } \R^2,\\
v= \nabla^\perp (-\Delta)^{-1+\alpha} \theta &\text{ in } \R^2,\\
\theta|_{t=0} =\theta_0. &
\end{aligned}
\right.
\end{equation}

The case $\alpha=0$ corresponds to the 2D Euler system, while $\alpha=1 / 2$ corresponds to the surface quasi-geostrophic (SQG) system. We prove the analogue of our main result for the gSQG system in the half-plane $\R^2_+$ as $0 < \alpha < 1 / 2$. In this setting, the Biot-Savart law is given by
\begin{equation}
K_\alpha(x,y) = C_\alpha \left(\frac{(x-y)^\perp}{|x-y|^{2+2\alpha}} - \frac{(x-\bar{y})^\perp}{|x-\bar{y}|^{2+2\alpha}}\right).
\end{equation}

\begin{theorem}\label{thm_gsqg_growth}
Consider the gSQG system \eqref{gsqg} for $0 < \alpha < \frac{1}{2}$ in the half-plane $\R^2_+$ with non-zero initial data $\theta_0 \in L^1 \cap L^\infty(\R^2_+)$, $\theta_0 \ge 0$ and $\supp \theta_0$ is bounded. 
Then, for each $\eps>0$, there exists a smooth perturbation $\tilde{\theta}_0$ of $\theta_0$ such that $\|\tilde{\theta}_0 - \theta_0\|_{L^1} < \eps$, and
the corresponding solution $\tilde{\theta}(t)$ to \eqref{gsqg} with initial data $\tilde{\theta}_0$ satisfies that
$\|\nabla \tilde{\theta}(t)\|_{L^\infty}$ grows at least linearly in $t\in [0,T)$, where $T$ is the lifespan of the solution. 
\end{theorem}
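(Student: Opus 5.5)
We mirror the Euler argument of \autoref{thm_gredient_growth}, replacing each ingredient by its gSQG analogue. The only genuinely new input is a drift lower bound for the centre of mass. We take it from \autoref{lem_drift_gsqg}, announced in the introduction: for nonnegative $\theta$ with $P_2'=0$ it should give $P_1'(t)\ge V_P^{\alpha}>0$, with $V_P^\alpha$ depending only on the conserved quantities $m$, $M$, $P_2(0)$ and the gSQG energy $E_\alpha$.

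\textbf{Step 1: velocity bound.} The Euler estimate \autoref{lem_velocity_estimate} is replaced by
\[
\int \frac{f(y)}{|x-y|^{1+2\alpha}}\,\d y\le C_\alpha \|f\|_{L^1}^{\frac{1-2\alpha}{2}}\|f\|_{L^\infty}^{\frac{1+2\alpha}{2}}.
\]
This follows by splitting at radius $R$ and optimizing, using $1+2\alpha<2$ for local integrability; this is where $\alpha<1/2$ enters. It yields a uniform bound $|v|\le V_m^\alpha:=C_\alpha m^{\frac{1-2\alpha}{2}}M^{\frac{1+2\alpha}{2}}$.

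\textbf{Step 2: slow sparse part.} Using $|K_\alpha(x,y)|\le 2C_\alpha|x-y|^{-1-2\alpha}$, the computation from the proof of \autoref{thm_diameter_growth_quadrant} gives
\[
P_{1,d}'\le \frac{2C_\alpha}{m_d}\iint \frac{\theta(y)\theta_d(x)}{|x-y|^{1+2\alpha}} .
\]
We apply Step 1 with $f=\theta_d$, integrating first in $x$ and then against $\theta(y)$. This yields $P_{1,d}'\le C m\, M_d^{\frac{1+2\alpha}{2}}m_d^{-\frac{1+2\alpha}{2}}$. The sparse condition therefore becomes $M_d/m_d\le (C_0/(Cm))^{2/(1+2\alpha)}$.

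\textbf{Step 3: construction.} This is identical to the Euler case. Take $U\Supset D_0$ and set $\tilde\theta_0=(\ch_U(\theta_0+a))*\eta_\rho$, so that $\tilde\theta_0\equiv a$ on a gap $W$. For the sparse part on $W$ the ratio $M_d/m_d=|W|^{-1}$ must be small, so we choose $|W|\gtrsim (m/V_P^\alpha)^{2/(1+2\alpha)}$. As $a,\rho\to0$ we need $\tilde m$, $\tilde M$ and $V_P^\alpha[\tilde\theta_0]$ to stay close to their original values. This holds because $E_\alpha$ involves the locally integrable kernel $|x-y|^{-2\alpha}$ and is therefore $L^1\cap L^\infty$-continuous. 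The boundary-touching case is handled by first cutting off near $x_2=0$, as before.

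\textbf{Step 4: fast and slow particles.} The mass-splitting argument, using the uniform bound $V_m^\alpha$, produces a particle $x_f$ with $\tilde\theta\ge a$ and $x_{f,1}\ge \tfrac34 P_1(t)$ for all large $t$ in the lifespan. Step 2 produces a slow particle $x_s$ with $\tilde\theta(x_s)=a$. For $t<T_0$ the claim is trivial after adjusting the $O(1)$. \autoref{lem_geometric}, together with conservation of $|\{\tilde\theta>a/2\}|$ by the divergence-free flow, then gives $\|\nabla\tilde\theta(t)\|_{L^\infty}\gtrsim t$ on $[0,T)$. If $T<\infty$ the statement is only meaningful up to $T$; note that all the bounds used are uniform and never require global existence.

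\textbf{Main obstacle.} The main obstacle is \autoref{lem_drift_gsqg} itself, should it need proof here. We would compute $P_1'=\frac1m\iint K_{\alpha,1}(x,y)\theta\theta$. The antisymmetric direct term cancels, leaving the image term. Its first component has integrand proportional to $(x_2+y_2)|x-\bar y|^{-2-2\alpha}>0$. We would bound this below on the region where $x_2,y_2$ and $|x-y|$ are controlled: $P_2$ is conserved, and conservation of $E_\alpha$ forces mass to sit at bounded height and at mutual distances bounded in terms of $E_\alpha$. This is done following the Iftimie–Sideris–Gamblin argument with the exponents adjusted.
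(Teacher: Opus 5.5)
Your proposal is correct and follows the paper's route. You transplant the Euler argument of \autoref{thm_diameter_growth_quadrant} and \autoref{thm_gredient_growth} with \autoref{lem_drift_gsqg} as the only new input, and you usefully make explicit the adjusted velocity bound (kernel $|x-y|^{-1-2\alpha}$) and the resulting sparse condition $M_d/m_d\lesssim (C_0/m)^{2/(1+2\alpha)}$, which the paper leaves implicit.

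For the drift lemma itself, the paper does not localize to a good region as in your sketch. Instead it applies H\"older's inequality to the energy with the positive weight $(x_2+y_2)|x-\bar y|^{-2-2\alpha}$ and exponent $q=2+2\alpha$, reducing everything to $\iint (x_2+y_2)|x-y|^{-\beta}\theta\theta$ with $\beta<2$, which is controlled by $P_2$, $m$ and $M$. Your localization approach also works, but it needs more bookkeeping, in particular a lower cutoff on $x_2+y_2$.
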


The proof is completely parallel to the Euler case, once we obtain the drift estimate for the horizontal center of mass as in the Euler case \eqref{VP_half_plane}.

\begin{lemma}\label{lem_drift_gsqg}
For the gSQG system \eqref{gsqg} in the half-plane $\R^2_+$, we have the following drift estimate for the horizontal center of mass: For $\theta_0 \in L^1 \cap L^\infty(\R^2_+)$ with $\theta_0 \ge 0$ and $\supp \theta_0 \subset \R^2_+$ is bounded, a constant $C=C(\alpha,m,M,P_2(0),E)$ exists such that $P_1'(t) \ge C$ for all $t\in[0,T)$, where $T$ is the lifespan of the solution.
\end{lemma}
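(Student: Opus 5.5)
The plan is to follow the Iftimie--Sideris--Gamblin argument behind \eqref{center_of_mass_half_plane}. First I write $P_1'$ as a manifestly nonnegative double integral. Then I show that a fixed portion of the conserved energy sits in a region where this double integral dominates the energy density. Throughout, $C_\alpha>0$ is the constant in $K_\alpha$ and all computations are for the smooth solution on $[0,T)$.

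\textbf{Step 1 (formula for $P_1'$).} As in the proof of \autoref{thm_diameter_growth_quadrant}, transport and incompressibility give $P_1'(t)=\frac1m\int v_1\theta\,\d x=\frac1m\iint K_{\alpha,1}(x,y)\theta(t,y)\theta(t,x)\,\d y\,\d x$. The direct part $(x-y)^\perp/|x-y|^{2+2\alpha}$ is antisymmetric under $x\leftrightarrow y$ and integrates to zero. The first component of $(x-\bar y)^\perp$ is $-(x_2+y_2)$, so
$$
P_1'(t)=\frac{C_\alpha}{m}\iint_{\R^2_+\times\R^2_+}\frac{x_2+y_2}{|x-\bar y|^{2+2\alpha}}\,\theta(t,x)\theta(t,y)\,\d x\,\d y=:\frac{C_\alpha}{m}I(t)\ge0 .
$$
The same symmetry argument gives $P_2'=0$, because the second component of the image term is $x_1-y_1$, which is antisymmetric. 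Hence $P_2(t)=P_2(0)$.

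I use the conserved energy $E=c_\alpha\iint G(x,y)\theta\theta$ with $G=|x-y|^{-2\alpha}-|x-\bar y|^{-2\alpha}\ge0$. Since $|x-\bar y|^2=|x-y|^2+4x_2y_2$, the mean value theorem applied to $s\mapsto s^{-\alpha}$ gives
$$
G(x,y)\le C\min\bigl(|x-y|^{-2\alpha},\,x_2y_2|x-y|^{-2-2\alpha}\bigr).
$$

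\textbf{Step 2 (localizing the energy).} I fix parameters $L,R,\delta$ and bound the energy carried by three bad regions.
\begin{enumerate}[(a)]
\item On $\{x_2>L\}$ or $\{y_2>L\}$: Chebyshev and $P_2$-conservation give mass at most $mP_2(0)/L$ there. The gSQG analogue of \autoref{lem_velocity_estimate} gives $\int|x-y|^{-2\alpha}\theta(y)\,\d y\le Cm^{1-\alpha}M^{\alpha}$. So this contribution is at most $C m^{2-\alpha}M^\alpha P_2(0)/L$.
\item On $\{|x-y|<\delta\}$: the contribution is at most $CmM\delta^{2-2\alpha}$.
\item On $\{|x-y|>R,\ x_2,y_2\le L\}$: using the second bound on $G$, the contribution is at most $Cm^2L^2R^{-2-2\alpha}$.
\end{enumerate}
I choose $L$ large, then $\delta$ small, then $R$ large, all depending only on $\alpha,m,M,P_2(0),E$, so that each contribution is at most $E/(4c_\alpha)$. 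Then at least $E/4$ of the energy lives in $\Omega=\{x_2,y_2\le L,\ \delta\le|x-y|\le R\}$.

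\textbf{Step 3 (comparison on $\Omega$).} On $\Omega$ I have $G\le Cx_2y_2\delta^{-2-2\alpha}\le CL\delta^{-2-2\alpha}(x_2+y_2)$. I also have $|x-\bar y|\le R+2L$. Therefore
$$
G(x,y)\le CL\delta^{-2-2\alpha}(R+2L)^{2+2\alpha}\,\frac{x_2+y_2}{|x-\bar y|^{2+2\alpha}} .
$$
Integrating over $\Omega$ gives $E/4\le c_\alpha CL\delta^{-2-2\alpha}(R+2L)^{2+2\alpha}I(t)$. Hence $P_1'(t)\ge C(\alpha,m,M,P_2(0),E)>0$ uniformly on $[0,T)$.

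The main obstacle is Step 2: the parameter choices must be made in the right order and must not depend on $t$. Time-independence rests only on conservation of $m$, $M$, $E$ and $P_2$, together with the uniform $L^1\cap L^\infty$ bound on the $|x|^{-2\alpha}$ potential. The latter follows by splitting $\int|x-y|^{-2\alpha}\theta(y)\,\d y$ at the radius $r=(m/M)^{1/2}$, exactly as in the proof of \autoref{lem_velocity_estimate}; the condition $\alpha<1/2$, indeed $\alpha<1$, makes this work.
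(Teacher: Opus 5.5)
Your proof is correct, but after the common first step it takes a different route from the paper. That first step is the same in both: the formula for $P_1'$ via antisymmetry of the direct part, and $P_2'=0$.

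The paper does not localize. It applies a single weighted H\"older inequality to $E=\iint G\,\theta\theta$ against the weight $w=(x_2+y_2)|x-\bar y|^{-2-2\alpha}$, with exponent $q=2\alpha+2$. This exponent is chosen so that, after the pointwise bound $G\,|x-\bar y|\lesssim (x_2y_2)^{1/2}|x-y|^{-2\alpha}$, the factor $|x-\bar y|$ cancels. The dual integral then becomes $\iint (x_2+y_2)|x-y|^{-\beta}\theta\theta$ with $\beta=4\alpha(\alpha+1)/(2\alpha+1)<2$, which is bounded by $P_2$ times a Riesz-potential estimate. This gives, in one stroke, the power law $P_1'\gtrsim E^q/(P_2\, m^{1-\beta/2}M^{\beta/2})^{q-1}$, up to factors of $m$.

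Your route is the Iftimie--Sideris--Gamblin cutoff argument. You discard large heights (Chebyshev plus conservation of $P_2$), short separations ($L^\infty$ bound), and long separations (decay of $G$), then compare crudely on the remaining bounded region. This needs only the elementary bound $G\lesssim\min\bigl(|x-y|^{-2\alpha},\,x_2y_2|x-y|^{-2-2\alpha}\bigr)$ and no tuning of exponents. It therefore transfers directly to other kernels with the same near- and far-field behaviour. Tracking $L,\delta,R$ also yields an explicit constant, though a less tidy one than the paper's.

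A side benefit is that you avoid the delicate pointwise estimate the paper relies on. As printed, the paper's intermediate bound on $G$ has $|x-\bar y|^{2\alpha}$ where $|x-\bar y|$ is needed; it is not scale-invariant and fails for small configurations when $\alpha<1/2$. The final bound $C_\alpha(x_2+y_2)|x-y|^{-\beta}$ that the paper uses is nevertheless true.

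Two small points are worth a line in your write-up:
\begin{enumerate}
\item $E>0$, because $G>0$ on $\R^2_+\times\R^2_+$ and $\theta_0\ge 0$ is nonzero.
\item The cancellation in Step 1 uses absolute integrability of $|x-y|^{-1-2\alpha}$, which is where $\alpha<1/2$ enters.
\end{enumerate}
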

\begin{proof}
The proof is based on the conservation of energy, that
\begin{equation}\label{eq_energy_gsqg}
\begin{aligned}
E &= \int_{\R^2_+}\int_{\R^2_+} \left[\frac{1}{|x-y|^{2\alpha}} - \frac{1}{|x-\bar{y}|^{2\alpha}} \right] \theta(t,x)\theta(t,y)\,\mathrm{d}x\, \mathrm{d}y \\
\end{aligned}
\end{equation}
remains constant for all time. 
Differentiating the first moment and using the Biot--Savart law gives
$$
P_1(t) = \frac{1}{m} \int_{\R^2_+} x_1 \theta(t,x)\,\d x ,
\qquad
P_1'(t) = \int_{\R^2_+}u_1(x)\theta(x)\,\d x  .
$$
Since
$$
    u_1(x)
    =
    c_\alpha
    \int_{\R^2_+}
    \left(
        -\frac{x_2-y_2}{|x-y|^{2\alpha+2}}
        +
        \frac{x_2+y_2}{|x-\bar y|^{2\alpha+2}}
    \right)\theta(y)\,\d y,
$$
the first term is odd under the exchange $x\leftrightarrow y$. Hence
$$
    P_1'(t)
    =
    c_\alpha
    \iint_{\R^2_+\times\R^2_+}
    \frac{x_2+y_2}{|x-\bar y|^{2\alpha+2}}
    \theta(x)\theta(y)\,\d x \d y .
$$
Similarly, $P_2'(t)=0$, and therefore $P_2$ is conserved.
By H\"{o}lder's inequality,
$$
    E\le C_\alpha P_1'(t)^{1/q} \cdot
    \left(
        \iint_{\R^2_+\times\R^2_+}
        \left[
            \frac{
                \left(
                    |x-y|^{-2\alpha}
                    -
                    |x-\bar y|^{-2\alpha}
                \right)^q
            }{
                (x_2+y_2)|x-\bar y|^{-(2\alpha+2)}
            }
        \right]^{1/(q-1)}
        \theta(x)\theta(y)\,\d x \d y
    \right)^{(q-1)/q}.
$$
The proof can be concluded by bounding the last term on the right-hand side. Since
$$
    |x-y|^{-2\alpha}-|x-\bar y|^{-2\alpha}
    =
    |x-y|^{-2\alpha}
    \left[
        1-
        \left(
            1+\frac{4x_2y_2}{|x-y|^2}
        \right)^{-\alpha}
    \right] \le
    C_\alpha
    \frac{(x_2y_2)^{1/2}}
    {|x-y|^{2\alpha} |x-\bar y|^{2\alpha}},
$$
and $x_2y_2\le (x_2+y_2)^2/4$, taking $q=2\alpha+2$ yields
$$
\begin{aligned}
\left[\frac{\left(|x-y|^{-2\alpha}-|x-\bar y|^{-2\alpha}\right)^q}
{(x_2+y_2)|x-\bar y|^{-(2\alpha+2)}}\right]^{1/(q-1)}
&\le C_\alpha \frac{x_2+y_2}{|x-y|^{\alpha q/(q-1)}|x-\bar{y}|^{(q-\alpha-2)/(q-1)}}
\\
& = C_\alpha (x_2+y_2)|x-y|^{-\beta},
\end{aligned}
$$
where
$$
    \beta=\frac{2\alpha q}{q-1}
    =
    \frac{4\alpha(\alpha+1)}{2\alpha+1}.
$$
Since $0<\alpha<1 / 2$, we have $0<\beta<2$. The following $L^\infty$ estimate similar to \autoref{lem_velocity_estimate} can be found in \cite[Lemma 2.2.2]{Iftimie2007}:
$$
    |\int_{\R^2_+}
    \frac{\theta(y)}{|x-y|^\beta}\,\d y|
    \le
    C_\beta m^{1-\beta/2}M^{\beta/2}.
$$
Consequently,
$$
\begin{aligned}
    &\iint_{\R^2_+\times\R^2_+}
    (x_2+y_2)|x-y|^{-\beta}
    \theta(x)\theta(y)\,\d x \d y = 2\int_{\R^2_+} x_2 \theta(x) \int_{\R^2_+} \frac{\theta(y)}{|x-y|^\beta}\,\d y \,\d x
    \\
    &\qquad\le
    C_\alpha
    P_2(t)\,m^{1-\beta/2}M^{\beta/2}.
\end{aligned}
$$
Combining the previous estimates gives
$$
    E_\alpha
    \le
    C_\alpha
    P_1'(t)^{1/q}
    \left(
        P_2(t)\,m^{1-\beta/2}M^{\beta/2}
    \right)^{(q-1)/q}.
$$
Raising to the power $q$ and using the formula for $P_1'(t)$, we obtain
$$
    P_1'(t)
    \ge
    c_\alpha
    \frac{
        E_\alpha^q
    }{
        \left(
            P_2(t)\,m^{1-\beta/2}M^{\beta/2}
        \right)^{q-1}
    } .
$$
The quantities $m,M,P_2(t)=P_2(0)$, and $E_\alpha$ are conserved. Hence the right-hand side
is a positive constant depending only on the initial data.
\end{proof}

\bibliographystyle{amsplain}
\bibliography{main}

\end{document}